\documentclass[reqno]{amsart}

\usepackage[a4paper, left=1.5in,right=1.5in,top=1.25in]{geometry}

\usepackage{mathrsfs}
\usepackage{amsfonts}
\usepackage{amsmath}
\usepackage{amssymb}
\usepackage[table,xcdraw]{xcolor}
\usepackage{pifont}
\usepackage{mathtools}
\usepackage{float}
\usepackage{xcolor}
\usepackage{mathdots}
\usepackage{enumitem}
\usepackage{array}
\usepackage{subcaption}
\usepackage{mathdots}
\usepackage{tikz}
\usepackage{tikz-cd}
\usetikzlibrary{arrows,automata}
\usetikzlibrary{decorations.markings}
\usetikzlibrary{automata, positioning, arrows}
\usetikzlibrary{math}
\usepackage{hyperref}
\usepackage{centernot}

\interfootnotelinepenalty=10000

\newcommand{\pres}[3]{\textnormal{#1} \langle #2 \mid #3 \rangle}

\newcommand{\Z}{\mathbb{Z}}

\newcommand{\cH}{\mathcal{H}}

\newcommand{\cM}{\mathcal{M}}


\providecommand{\customgenericname}{}

\newtheorem{theorem}{Theorem} 
\numberwithin{theorem}{section}
\newtheorem*{theorem*}{Theorem} 
\newtheorem{lemma}[theorem]{Lemma}     
\newtheorem{corollary}[theorem]{Corollary}

\newtheorem*{mainlemma*}{Main Lemma}

\newtheorem*{conjecture*}{Conjecture}

\theoremstyle{definition}

\newtheorem*{question*}{Question}

\numberwithin{example}{section}

\newtheorem*{remark*}{Remark}


\DeclareMathOperator{\SL}{SL}

\newcommand{\N}{{\mathbb{N}}}
\newcommand{\SLP}[1]{\SL_2(\mathbb{Z}[\frac{1}{#1}])}

\begin{document}

\title[The abelianization of $\SLP{m}$]{The abelianization of $\SLP{m}$}

\author{Carl-Fredrik Nyberg-Brodda}
\address{School of Mathematics, Korea Institute for Advanced Study (KIAS), Seoul 02455, Republic of Korea}
\email{cfnb@kias.re.kr}
\thanks{The author is supported by Mid-Career Researcher Program (RS-2023-00278510) through the National Research Foundation funded by the government of Korea.}

\subjclass[2020]{13D03 (primary), 20H25, 20F05 (secondary)}

\date{\today}


\keywords{}

\begin{abstract}
For all $m \geq 1$, we prove that the abelianization of $\SLP{m}$ is (1) trivial if $6 \mid m$; (2) $\Z / 3\Z$ if $2 \mid m$ and $\gcd(3,m)=1$; (3) $\Z / 4 \Z$ if $3 \mid m$ and $\gcd(2,m)=1$; and (4) $\Z / {12}\Z \cong \Z / 3\Z \times \Z / 4\Z$ if $\gcd(6,m)=1$. This completes known computational results of Bui Anh \& Ellis for $m \leq 50$. The proof is completely elementary, and in particular does not use the congruence subgroup property. We also find a new presentation for $\SLP{2}$. This presentation has two generators and three relators. Thus, $\SLP{2}$ admits a presentation with deficiency equal to the rank of its Schur multiplier. This also gives new and very simple presentations for the finite groups $\SL_2(\Z / m \Z)$, where $m$ is odd.
\end{abstract}

\maketitle

Throughout this article, $\SLP{m}$ denotes the multiplicative group of $2 \times 2$ matrices over the ring $\Z[\frac{1}{m}]$ with determinant $1$. In particular, $\SLP{m} = \SLP{n}$ if and only if $m$ and $n$ have the same set of prime factors.  We will use these groups to yield a formula for the abelianization of $\SLP{m}$ for all $m$. Computing the integral homology groups $H_k(\SLP{m}, \Z)$ is a very difficult problem in general; Bui Anh \& Ellis \cite{BuiAnh2014} did so for small $k$ and $m \leq 50$. The result contained in this present note thus completely solves the case of $k=1$. 

\begin{remark*}
In the final days of preparing this note, a preprint by Mirzaii \& Torres~P\'{e}rez \cite{MirzaiiPerez} appeared on the arXiv, containing numerous interesting results on $\operatorname{E}_2(A)$ for arbitrary rings. A particular case of their methods computes the abelianization of $\SLP{m}$ (their Proposition~4.3) as in this note. However, their method is quite different from and less elementary than that contained in this note, and in particular their note does not contain a presentation for $\SLP{2}$. 
\end{remark*}

\vspace{-0.1cm}

\section{}

For $m \geq 1$, let us define a three-relator group 
\begin{equation}
\cH_m = \pres{}{x,y}{x^m y x^m = yx^my, \: y^mxy^m = xy^mx, \: (x^2y^m)^4=1}.
\end{equation}
For $m=1$ we find a well-known presentation for $\SL_2(\Z)$, i.e.\ $\cH_1 \cong \SL_2(\Z) = \SLP{1}$. For $m \geq 1$, let $\varphi_m \colon \cH_m \to \SLP{m}$ be defined by 
\begin{equation}\label{Eq:varphi-map-def}
\varphi_m(x) = A = \begin{pmatrix}
1 & 0 \\ 1 & 1 
\end{pmatrix}, \quad \varphi_m(y) = Q_m = \begin{pmatrix}
1 & -1/m \\ 0 & 1 
\end{pmatrix}.
\end{equation}

This map shall be used to prove all the results of this article.

\begin{lemma}\label{Lem:Exists-surjection}
For all $m \geq 1$, $\varphi_m \colon \cH_m \to \SLP{m}$ is a surjective homomorphism.
\end{lemma}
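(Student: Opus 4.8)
The plan is to split Lemma~\ref{Lem:Exists-surjection} into two independent parts: (a) $\varphi_m$ is a well-defined homomorphism, and (b) its image is all of $\SLP{m}$. For (a), it is convenient to abbreviate $E_{12}(r)=\begin{pmatrix}1&r\\0&1\end{pmatrix}$ and $E_{21}(r)=\begin{pmatrix}1&0\\r&1\end{pmatrix}$, and to record at the outset that $A=E_{21}(1)$ and $Q_m=E_{12}(-1/m)$, so that $A^m=E_{21}(m)$ and $Q_m^{\,m}=E_{12}(-1)$; in particular every matrix appearing below has entries in $\Z[\tfrac1m]$. It then remains to verify that the three defining relators of $\cH_m$ are killed under $x\mapsto A$, $y\mapsto Q_m$. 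Substituting, the first relator becomes $E_{21}(m)E_{12}(-\tfrac1m)E_{21}(m)=E_{12}(-\tfrac1m)E_{21}(m)E_{12}(-\tfrac1m)$ (both sides equal $\begin{pmatrix}0&-1/m\\m&0\end{pmatrix}$); the second becomes the classical braid relation $E_{12}(-1)E_{21}(1)E_{12}(-1)=E_{21}(1)E_{12}(-1)E_{21}(1)$; and for the third one observes that $A^2Q_m^{\,m}=E_{21}(2)E_{12}(-1)=\begin{pmatrix}1&-1\\2&-1\end{pmatrix}$ has trace $0$ and determinant $1$, so Cayley--Hamilton gives $(A^2Q_m^{\,m})^2=-I$ and hence $(A^2Q_m^{\,m})^4=I$. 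This is all routine $2\times2$ arithmetic, so $\varphi_m$ is a well-defined homomorphism.

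For surjectivity, write $G=\operatorname{im}\varphi_m=\langle A,Q_m\rangle\leq\SLP{m}$. First, $Q_m^{-m}=E_{12}(1)=\begin{pmatrix}1&1\\0&1\end{pmatrix}$, so $G$ contains $\SL_2(\Z)=\langle A,E_{12}(1)\rangle$; in particular $G$ contains $S=\begin{pmatrix}0&-1\\1&0\end{pmatrix}$. The decisive extra element to exhibit in $G$ is the diagonal matrix $D=\operatorname{diag}(m,1/m)$. For this I would use the standard expression of $\operatorname{diag}(u,u^{-1})$, for a unit $u$, as a product of the shape $E_{12}(u)E_{21}(-u^{-1})E_{12}(u)\cdot S^{\pm1}$; taking $u=m$ and noting $E_{12}(m)=Q_m^{-m^2}\in G$, $E_{21}(-1/m)=SE_{12}(1/m)S^{-1}=SQ_m^{-1}S^{-1}\in G$, and $S\in G$, this shows $D\in G$. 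Conjugation by $D$ scales elementary matrices, $DE_{12}(r)D^{-1}=E_{12}(m^2r)$, so conjugating $E_{12}(1)\in G$ and $E_{12}(-1/m)\in G$ by powers of $D$ and $D^{-1}$ puts $E_{12}(m^j)$ into $G$ for every $j\in\Z$. Since $\{m^j:j\in\Z\}$ generates $(\Z[\tfrac1m],+)$ and $r\mapsto E_{12}(r)$ is an additive homomorphism, $G\supseteq E_{12}(\Z[\tfrac1m])$; conjugating by $S$ then also gives $G\supseteq E_{21}(\Z[\tfrac1m])$. Finally, invoking that $\SLP{m}$ is generated by its elementary matrices, we conclude $G=\SLP{m}$.

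The one step that is not pure formal manipulation is this last fact, that $\SLP{m}$ equals $\mathrm E_2(\Z[\tfrac1m])$: unlike everything else, it genuinely uses that $\Z[\tfrac1m]$ is a sufficiently nice ring (the analogous statement can fail for $\SL_2$ over a general PID), so this is where I expect the real content to sit. I would handle it either by citing the classical result that $\SL_2$ of a Euclidean domain is generated by elementary matrices (together with the fact that $\Z[\tfrac1m]$ is Euclidean), or by giving the direct reduction: clear denominators by a common power of $m$, use integer column operations to kill the bottom-left entry, thereby reducing a given matrix to an upper-triangular one $\begin{pmatrix}v&\ast\\0&v^{-1}\end{pmatrix}$ with $v$ a unit, and then express such a matrix as a product of elementaries (again via the $\operatorname{diag}(v,v^{-1})$ identity). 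The remainder of the argument is just the $2\times2$ matrix arithmetic and conjugation bookkeeping sketched above.
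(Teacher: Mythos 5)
Your proposal is correct. The homomorphism half coincides with the paper's proof: both verify the three relators on $A$ and $Q_m$ by direct $2\times 2$ computation (your Cayley--Hamilton observation that $A^2Q_m^m$ has trace $0$, hence squares to $-I$, is a nice shortcut; the paper simply records that this matrix has order $4$). For surjectivity the routes differ in packaging. The paper quotes the classical fact (citing Mennicke) that $\SLP{m}$ is generated by $A$, $B=\begin{psmallmatrix}0&1\\-1&0\end{psmallmatrix}$ and $U_m=\begin{psmallmatrix}m&0\\0&1/m\end{psmallmatrix}$, and then finishes with the two one-line identities $B=A^{-1}Q_m^{-m}A^{-1}$ and $U_m=B^{-1}Q_m^{-1}A^{-m}Q_m^{-1}$. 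You instead re-derive the generation statement: you produce $S$ and $D=\operatorname{diag}(m,1/m)$ in the image (the same two matrices as the paper's $B$ and $U_m$, up to sign and inversion), conjugate to obtain all elementary matrices over $\Z[\tfrac{1}{m}]$, and then invoke $\SL_2(\Z[\tfrac{1}{m}])=\operatorname{E}_2(\Z[\tfrac{1}{m}])$, justified either by Euclideanness of $\Z[\tfrac{1}{m}]$ or by the denominator-clearing reduction you sketch. Both arguments ultimately rest on the same classical input --- the generation result the paper cites is proved by exactly such an elementary reduction --- so the paper's proof is shorter because it cites the three-generator statement outright, while yours is more self-contained and makes explicit where the ring-theoretic content enters (you rightly note that $\SL_2=\operatorname{E}_2$ can fail over a general PID, so the Euclidean property or the explicit reduction is genuinely needed). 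The intermediate identities you use ($Q_m^{-m}=E_{12}(1)$, $SE_{12}(1/m)S^{-1}=E_{21}(-1/m)$, $DE_{12}(r)D^{-1}=E_{12}(m^2r)$, the factorization of $\operatorname{diag}(u,u^{-1})$) all check out, and the sign ambiguity in your $S^{\pm 1}$ step is harmless since $-I$ already lies in $\SL_2(\Z)\leq\operatorname{im}\varphi_m$.
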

\begin{proof}
Let $A = \varphi_m(x)$ and $Q_m = \varphi_m(y)$, as in \eqref{Eq:varphi-map-def}. To prove that $\varphi_m$ is a homomorphism, we check that all relations in $\cH_m$ are satisfied by $A$ and $Q_m$, viz.\
\begin{align*}
A^m Q_m A^m &= \begin{pmatrix}
0 & -1/m \\ m & 0
\end{pmatrix} = Q_m A^m Q_m, \\
Q_m^m A Q_m^m &= \begin{pmatrix}
0 & -1 \\ 1 & 0
\end{pmatrix} = A Q_m^m A, 
\end{align*}
and clearly $A^2 Q_m^m = \begin{psmallmatrix}1 & -1 \\ 2 & -1 \end{psmallmatrix}$, an element of order $4$. Thus $\varphi_m$ is a homomorphism. To see that $\varphi_m$ is surjective, it suffices to note that $A, Q_m$ generate $\SLP{m}$. This is not difficult; indeed, it is well-known that $\SLP{m}$ is generated by the three matrices
\[
A = \begin{pmatrix}
1 & 0 \\ 1 & 1 
\end{pmatrix}, \quad B = \begin{pmatrix}
0 & 1 \\ -1 & 0
\end{pmatrix}, \quad \text{and} \quad U_m = \begin{pmatrix}
m & 0 \\ 0 & 1/m 
\end{pmatrix},
\]
(for the short proof of this fact, see e.g.\ \cite{Mennicke1967}). But $B = A^{-1} Q_{m}^{-m} A^{-1}$ and $U_m = B^{-1} Q_m^{-1} A^{-m}Q_m^{-1}$. Thus, $\varphi_m$ is surjective. 
\end{proof}

\begin{theorem}\label{Thm:Main-thm}
For all $m \geq 1$, we have 
\[
\SLP{m}^{\operatorname{ab}}\cong \cH_m^{\operatorname{ab}} \cong \begin{cases*} 
1 & if $6 \mid m$, \\
\Z / 3 \Z & if $2 \mid m$ and $\gcd(3,m)=1$,\\
\Z / 4 \Z & if $3 \mid m$ and $\gcd(2,m)=1$, \\
\Z / 12\Z & if $\gcd(m,6)=1$.
\end{cases*}
\]
\end{theorem}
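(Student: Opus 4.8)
The plan is to show that both $\cH_m^{\operatorname{ab}}$ and $\SLP{m}^{\operatorname{ab}}$ are isomorphic to $\Z/d\Z$, where $d=\gcd(m^2-1,12)$, and to observe that $d\in\{1,3,4,12\}$ in precisely the four cases listed. Computing $\cH_m^{\operatorname{ab}}$ is routine: writing the abelianization additively with $a,b$ the images of $x,y$, the three relators become $b=ma$, $a=mb$, and $8a+4mb=0$; eliminating $b=ma$ leaves a cyclic group on $a$ subject to $(m^2-1)a=0$ and $4(m^2+2)a=0$, and since $4(m^2+2)-4(m^2-1)=12$ the order is $\gcd(m^2-1,12)=d$. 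A short check — if $m$ is even then $m^2-1$ is odd, if $m$ is odd then $8\mid m^2-1$, and $3\mid m$ iff $3\nmid m^2-1$ — shows $d=1,3,4,12$ in the four cases, so that $\cH_m^{\operatorname{ab}}$ is as stated.

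For $\SLP{m}^{\operatorname{ab}}$ I would squeeze it between two bounds. The upper bound is immediate: by Lemma~\ref{Lem:Exists-surjection} the map $\varphi_m$ is onto, so $\cH_m^{\operatorname{ab}}\cong\Z/d\Z$ surjects onto $\SLP{m}^{\operatorname{ab}}$, which is therefore cyclic of order dividing $d$. (Alternatively one sees this directly: $\SLP{m}$ is generated by elementary matrices, so its abelianization is cyclic, generated by the class $c$ of $\begin{psmallmatrix}1&1\\0&1\end{psmallmatrix}$; conjugation by $\operatorname{diag}(m,1/m)$ forces $(m^2-1)c=0$ and the embedded copy of $\SL_2(\Z)$ forces $12c=0$, so $dc=0$.) For the lower bound, note that $\gcd(m,m^2-1)=1$ gives $\gcd(m,d)=1$, so there is a ring map $\Z[\tfrac1m]\to\Z/d\Z$ and hence a reduction homomorphism $\SLP{m}\to\SL_2(\Z/d\Z)$; since the subgroup $\SL_2(\Z)\le\SLP{m}$ already maps onto $\SL_2(\Z/d\Z)$, composing with abelianization yields a surjection $\SLP{m}^{\operatorname{ab}}\twoheadrightarrow\SL_2(\Z/d\Z)^{\operatorname{ab}}$. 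It then suffices to know $\SL_2(\Z/d\Z)^{\operatorname{ab}}\cong\Z/d\Z$ for $d\in\{1,3,4,12\}$: the case $d=1$ is empty; $\SL_2(\Z/3\Z)$ is the binary tetrahedral group, whose derived subgroup is the normal copy of $Q_8$ of index $3$; the Chinese Remainder Theorem gives $\SL_2(\Z/12\Z)\cong\SL_2(\Z/3\Z)\times\SL_2(\Z/4\Z)$, reducing $d=12$ to the cases $3$ and $4$; and $\SL_2(\Z/4\Z)^{\operatorname{ab}}\cong\Z/4\Z$ by a finite computation (or via the classical presentation $\SL_2(\Z/N\Z)\cong\SL_2(\Z)/\langle\langle T^N\rangle\rangle$, valid for $N\le5$, which gives $\SL_2(\Z/N\Z)^{\operatorname{ab}}\cong\Z/\gcd(N,12)\Z$). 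The two bounds together force $\SLP{m}^{\operatorname{ab}}\cong\Z/d\Z$, and then the surjection $\cH_m^{\operatorname{ab}}\to\SLP{m}^{\operatorname{ab}}$ between groups of equal order is an isomorphism.

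The only genuinely non-formal step is the value of $\SL_2(\Z/4\Z)^{\operatorname{ab}}$. Its abelianization is visibly cyclic, generated by the class of $\begin{psmallmatrix}1&1\\0&1\end{psmallmatrix}$, and surjects onto $\SL_2(\Z/2\Z)^{\operatorname{ab}}\cong\Z/2\Z$, so one must only rule out that it collapses to $\Z/2\Z$, i.e.\ that $\begin{psmallmatrix}1&2\\0&1\end{psmallmatrix}$ is a product of commutators. I would settle this by studying the congruence kernel $K=\ker\bigl(\SL_2(\Z/4\Z)\to\SL_2(\Z/2\Z)\bigr)\cong(\Z/2\Z)^3$ as a module over $\SL_2(\Z/2\Z)\cong S_3$: it splits as a trivial summand $\{\pm I\}$ plus the $2$-dimensional irreducible $K'$, the commutators landing in $K$ span exactly $K'$, the class of $\begin{psmallmatrix}1&2\\0&1\end{psmallmatrix}$ lies outside $K'$, and a further short check (e.g.\ identifying $\SL_2(\Z/4\Z)/[\SL_2(\Z/4\Z),\SL_2(\Z/4\Z)]\,K'$ with the dicyclic group of order $12$, whose abelianization is $\Z/4\Z$) shows the derived subgroup does not meet $\{\pm I\}$. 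Everything else in the argument is bookkeeping.
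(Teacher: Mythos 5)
Your proposal is correct and follows essentially the same route as the paper: abelianize $\cH_m$ to get $\Z/\gcd(m^2-1,12)\Z$, use the surjection $\varphi_m$ of Lemma~\ref{Lem:Exists-surjection} for the upper bound, and get the lower bound from reduction homomorphisms onto $\SL_2(\Z/3\Z)$ and $\SL_2(\Z/4\Z)$ (which you merely package as a single reduction mod $d$ plus the Chinese Remainder Theorem, while also supplying the detail, asserted without proof in the paper, that $\SL_2(\Z/4\Z)^{\operatorname{ab}} \cong \Z/4\Z$). One cosmetic slip: the quotient you write as $\SL_2(\Z/4\Z)/[\SL_2(\Z/4\Z),\SL_2(\Z/4\Z)]K'$ is abelian and so cannot be dicyclic --- you mean $\SL_2(\Z/4\Z)/K'$, which is indeed dicyclic of order $12$ (the image of $\begin{psmallmatrix}0&-1\\1&0\end{psmallmatrix}$ has order $4$ there since $-I \notin K'$), and this does complete your argument.
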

\begin{proof}
Deriving a formula for the abelianization of $\cH_m$ is a simple exercise in linear algebra. Indeed, by setting up the relation matrix for $\cH_m$ and computing subdeterminants, one finds that $H_1(\cH_m, \Z) \cong \Z / \gcd(m^2 + 1, 12m, 4m^2 + 8) \Z$ and this is easily seen to be equivalent to the specified condition.

By Lemma~\ref{Lem:Exists-surjection}, it follows that $\varphi_m$ induces a surjection $\varphi_m^{\operatorname{ab}} \colon \cH_m^{\operatorname{ab}} \to \SLP{m}^{\operatorname{ab}}$, and hence the latter group is also finite and indeed a quotient of the former. Let $S_m = \SLP{m}^{\operatorname{ab}}$. If $6 \mid m$, then $\cH_m^{\operatorname{ab}}$ and hence also $S_m$ is trivial. If $2 \mid m$ and $\gcd(3,m)=1$, then $|S_m| \leq 3$, but in this case $\SLP{m}$ surjects $\SL_2(3)$ which has abelianization $\Z / 3\Z$. Hence $S_m = \Z / 3\Z$ in this case. If $3 \mid m$ and $\gcd(2,m)=1$, then $|S_m| \leq 4$. Since $\SLP{m}$ surjects $\SL_2(\Z / 4\Z)$, and since this group has abelianization $\Z / 4\Z$, it follows that $S_m = \Z / 4\Z$ in this case. Finally, if $\gcd(m,6)=1$, then $|S_m| \leq 12$ and combining the above arguments shows that $S_m \cong \Z / 4\Z \times \Z / 3\Z \cong \Z / 12\Z$. 
\end{proof}

As a particular case we find ($m=1$) that $\SL_2(\Z)^{\operatorname{ab}} \cong \Z / 12\Z$, which is of course well-known. The results for $m=2,3$ are well-known, and for prime $p>3$ it is classical and follows from Serre's amalgam decomposition \cite[\S1.4]{Serre1980} that the abelianization of $\SLP{p}$ is $\Z / 12\Z$ (see also \cite{Adem1998}). Bui Anh \& Ellis \cite[Table~1]{BuiAnh2014} computed the abelianization (and many other homology groups) of $\SLP{m}$ for $m \leq 50$, and their computations agree with Theorem~\ref{Thm:Main-thm}.

\section{}

Above, we proved that $\varphi_m$ is a surjection for all $m$. A simple argument based on the determination of $H_2(\SLP{p}, \Z)$ by Adem \& Naffah \cite{Adem1998} shows that, for sufficiently large primes $p$, we must have $\SLP{p} \not\cong \cH_p$. K.\ Hutchinson (private communication) has shown the author a beautiful argument from algebraic $K$-theory which shows that $\varphi_m$ is not an isomorphism for $m \geq 3$. However, we shall now prove that $\varphi_2$ is an isomorphism. This yields a new presentation for $\SLP{2}$, simpler than that given by Serre \cite{Serre1980} and Behr \& Mennicke \cite{Behr1968}. 

\begin{theorem}
The map $\varphi_2$ is an isomorphism. In particular,
\[
\SLP{2} \cong \pres{}{x,y}{x^2 y x^2 = yx^2y, \: y^2xy^2 = xy^2x, \: (x^2y^2)^4=1}.
\]
\end{theorem}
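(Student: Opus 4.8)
The plan is to construct an explicit left inverse $\psi \colon \SLP{2} \to \cH_2$ of $\varphi_2$; since $\varphi_2$ is already surjective by Lemma~\ref{Lem:Exists-surjection}, producing such a $\psi$ forces $\varphi_2$ to be an isomorphism, and the displayed presentation then follows. The key external input is the amalgam decomposition of $\SLP{2}$ arising from its action on the ($3$-regular) Bruhat--Tits tree of $\SL_2(\Q_2)$: by Serre \cite[\S1.4]{Serre1980}, $\SLP{2} \cong G_1 \ast_{C} G_2$, where $G_1$ and $G_2$ are the stabilizers of two adjacent vertices (each conjugate to $\SL_2(\Z)$) and $C = G_1 \cap G_2 \cong \Gamma_0(2)$ is the edge stabilizer, of index $3$ in each $G_i$. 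Writing $A = \varphi_2(x)$ and $Q_2 = \varphi_2(y)$ as in \eqref{Eq:varphi-map-def}, a short matrix computation pins these subgroups down explicitly: $G_1 = \langle A, Q_2^2 \rangle = \SL_2(\Z)$, $G_2 = \langle Q_2, A^2 \rangle = D\,\SL_2(\Z)\,D^{-1}$ with $D = \begin{psmallmatrix}1 & 0 \\ 0 & 2\end{psmallmatrix}$, and $C = G_1 \cap G_2 = \langle A^2, Q_2^2 \rangle$, the last identity using the classical generating set of $\Gamma_0(2)$.

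Next I would exhibit the matching copies of $\SL_2(\Z)$ inside $\cH_2$. Set $H_1 = \langle x, y^2 \rangle$ and $H_2 = \langle y, x^2 \rangle$. The point is that the three relators of $\cH_2$ specialise, on each of these subgroups, to the relators of $\cH_1 \cong \SL_2(\Z)$: the second relator of $\cH_2$ is the braid relation $x y^2 x = y^2 x y^2$ on the pair $(x, y^2)$ and the third relator is $(x^2 y^2)^4 = 1$, so $x \mapsto x$, $y \mapsto y^2$ respects the relations of $\cH_1$ and defines a surjection $\SL_2(\Z) \cong \cH_1 \twoheadrightarrow H_1$; symmetrically, the first relator of $\cH_2$ is the braid relation on $(y, x^2)$ and $(y^2 x^2)^4 = 1$ is a conjugate of the third relator, giving a surjection $\SL_2(\Z) \twoheadrightarrow H_2$. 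Composing with $\varphi_2$ gives surjections $\SL_2(\Z) \twoheadrightarrow H_i \twoheadrightarrow G_i \cong \SL_2(\Z)$; since $\SL_2(\Z)$ is Hopfian (being finitely generated and residually finite), every map in these chains is an isomorphism, so in particular $\varphi_2$ restricts to isomorphisms $\varphi_2|_{H_i} \colon H_i \xrightarrow{\sim} G_i$.

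The final step is to glue these data along $C$. Put $K = \langle x^2, y^2 \rangle$, which lies in $H_1 \cap H_2$ and satisfies $\varphi_2(K) = \langle A^2, Q_2^2 \rangle = C$; since $\varphi_2|_{H_1}$ is injective, $\varphi_2$ restricts to an isomorphism $K \xrightarrow{\sim} C$, and $[H_i : K] = [G_i : C] = 3$. As $K \subseteq (\varphi_2|_{H_i})^{-1}(C)$ and this preimage also has index $3$ in $H_i$, it follows that $(\varphi_2|_{H_1})^{-1}(C) = K = (\varphi_2|_{H_2})^{-1}(C)$, and hence the isomorphisms $(\varphi_2|_{H_1})^{-1}$ and $(\varphi_2|_{H_2})^{-1}$ agree on $C$: each sends $c \in C$ to the unique element of $K \subseteq \cH_2$ mapping to $c$ under the injective map $\varphi_2|_K$. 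By the universal property of the amalgamated free product $G_1 \ast_C G_2$ these two maps assemble into a homomorphism $\psi \colon \SLP{2} \to \cH_2$ with $\psi|_{G_i} = (\varphi_2|_{H_i})^{-1}$, and $\psi \varphi_2 = \mathrm{id}_{\cH_2}$ is checked on generators ($\varphi_2(x) = A \in G_1$ with $\psi(A) = x$, and $\varphi_2(y) = Q_2 \in G_2$ with $\psi(Q_2) = y$). Thus $\varphi_2$ is injective, hence — with Lemma~\ref{Lem:Exists-surjection} — an isomorphism.

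The part I expect to need the most care is not conceptual but bookkeeping: matching the two subgroups $H_1,H_2 \leq \cH_2$ with the vertex groups of Serre's amalgam under the correct conventions (in particular the choice of conjugating matrix $D$ and the verification $G_1 \cap G_2 = \Gamma_0(2) = \langle A^2, Q_2^2\rangle$), together with the index-$3$ count in the last paragraph. That count is exactly what guarantees the two local inverses of $\varphi_2$ are compatible on the amalgamated subgroup $C$, which is the only step where the argument could fail.
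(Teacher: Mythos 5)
Your proposal is correct, but it proves the theorem by a genuinely different route than the paper. The paper starts from the Behr--Mennicke/Serre presentation $\pres{}{a,b,u}{b^4=1,\ b^2=(bu)^2=(ba)^3=(bua^2)^3,\ u^{-1}au=a^4}$ of $\SLP{2}$ and performs Tietze transformations: it adjoins $q=bua^2u^{-1}b^{-1}$, eliminates $b$ and $u$, checks that the surviving relators become the three relators of $\cH_2$, and verifies that the leftover relators are redundant. You instead use Serre's amalgam $\SLP{2}\cong G_1\ast_C G_2$ directly and build an explicit left inverse $\psi$ of $\varphi_2$: you locate the two vertex copies of $\SL_2(\Z)$ inside $\cH_2$ as $H_1=\langle x,y^2\rangle$ and $H_2=\langle y,x^2\rangle$ via the sub-presentations of $\cH_1$, use Hopficity of $\SL_2(\Z)$ to upgrade the surjections $\cH_1\twoheadrightarrow H_i\twoheadrightarrow G_i$ to isomorphisms, and glue the local inverses along $C$ by the index-$3$ count $[H_i:K]=[G_i:C]=3$ with $K=\langle x^2,y^2\rangle$, invoking the universal property of the amalgam. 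All the classical inputs you lean on are correct: $G_1=\langle A,Q_2^2\rangle=\SL_2(\Z)$, $G_2=\langle Q_2,A^2\rangle=D\SL_2(\Z)D^{-1}$, and $C=\Gamma_0(2)=\langle A^2,Q_2^2\rangle$ (note $A^2Q_2^2=\begin{psmallmatrix}1&-1\\2&-1\end{psmallmatrix}$, so $-I$ lies in this subgroup, which is what makes the $\SL_2$-level generation statement work); likewise $(y^2x^2)^4$ is indeed a conjugate of $(x^2y^2)^4$, and the identity $\psi\varphi_2=\mathrm{id}$ on the generators $x,y$ suffices. Both arguments ultimately rest on Serre \cite[\S1.4]{Serre1980}, but in different forms: the paper consumes the known presentation and pays with a routine but unilluminating redundancy check, while your argument consumes the amalgam itself and pays with the Hopficity trick, the $\Gamma_0(2)$ generation fact, and the index bookkeeping — in exchange it is more structural, exhibiting explicitly how the two vertex groups and the edge group sit inside $\cH_2$, and it makes transparent exactly where injectivity of $\varphi_2$ comes from.
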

\begin{proof}
The following presentation for $\SLP{2}$ can be found in Serre \cite[\S1.4]{Serre1980} and in Behr \& Mennicke \cite{Behr1968}: 
\begin{equation}\label{Eq:serre-pres}
\SLP{2} \cong \pres{}{a,b,u}{b^4 = 1, b^2 = (bu)^2=(ba)^3=(bua^2)^3, u^{-1}au = a^4},
\end{equation}
where the isomorphism is given by identifying $a, b, u$ with $A,B,U_2$ as defined above. Thus, it suffices to prove that the group presented by the presentation in \eqref{Eq:serre-pres} is isomorphic to $\cH_2$ under $\varphi_2$.  

Adding the new generator $q = bua^2u^{-1}b^{-1}$ (as $Q_2 = BU_2A^2U_2^{-1}B^{-1}$) to the presentation \eqref{Eq:serre-pres}, we have the equalities $b = a^{-1}q^{-2}a^{-1}$ and $u = b^{-1}q^{-1}a^{-2}q^{-1}$, as in the proof of Lemma~\ref{Lem:Exists-surjection}. We can thus eliminate these two generators $b$ and $u$, simplifying the presentation to one with the two generators $a$ and $q$. The relators of this presentation, we claim, can be taken as the relators of $\cH_2$ (with the relabelling $x=a$ and $y=q$). The relation $q = bua^2u^{-1}b^{-1}$ becomes, after simplification,
\begin{equation}\label{Eq:am-q-am}
a^2qa^2 = qa^2q
\end{equation}
which is the first relation of $\cH_2$. Next, we observe that $bu = q^{-1}a^{-2}q^{-1}$, and hence the relation $(ab)^3 = b^2$ becomes $(q^{-2}a^{-1})^3 = (a^{-1}q^{-2}a^{-1})^2$, which simplifies to 
\begin{equation}\label{Eq:qm-a-qm}
q^2a q^2 = aq^2a,
\end{equation}
the second relation of $\cH_2$. Third, the relation $b^4 = 1$ becomes rewritten to $(a^{-1}q^{-2}a^{-1})^4 = 1$, which is easily seen to be equivalent to 
\begin{equation}\label{Eq:a2qm4}
(a^2q^2)^4 = 1,
\end{equation}
being the third (and final) relator of $\cH_2$. We now must simply check that the remaining relators $(bu)^2=b^2, \: (bua^2)^3=b^2$, and $u^{-1}au = a^4$, when rewritten over $a, q$, are redundant modulo \eqref{Eq:am-q-am}, \eqref{Eq:qm-a-qm}, and \eqref{Eq:a2qm4}. This is a routine check, and can be done even without use the relator \eqref{Eq:a2qm4}.
\end{proof}

It would be interesting to understand $\ker \varphi_m$ for $m \geq 3$, and in particular to know how many (necessarily finitely many) relations, and which, must be added to $\cH_m$ to make it isomorphic to $\SLP{m}$. 

For $r \in \N$ with $\gcd(r,p)=1$, Mennicke's theorem \cite{Mennicke1967} asserts that the principal congruence subgroup of level $r$ coincides with the normal closure of $A^r$ in $\SLP{p}$. Hence, we also find new presentations for the quasisimple groups $\SL(2,r)$ when $r$ is an odd prime, and indeed more generally:

\begin{corollary}
For all odd $r$ we have 
\[
\SL_2(\Z / r\Z) \cong \pres{}{x,y}{x^2 y x^2 = yx^2y, \: y^2xy^2 = xy^2x, \: (x^2y^2)^4 = 1, \: x^r = 1}.
\]
\end{corollary}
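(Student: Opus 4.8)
The plan is to combine the isomorphism $\varphi_2 \colon \cH_2 \xrightarrow{\sim} \SLP{2}$ established in the previous theorem with Mennicke's theorem on principal congruence subgroups. Recall that $\cH_2$ is presented by $\pres{}{x,y}{x^2yx^2 = yx^2y, \: y^2xy^2 = xy^2x, \: (x^2y^2)^4 = 1}$, and that under $\varphi_2$ the generator $x$ corresponds to the elementary matrix $A = \begin{psmallmatrix}1 & 0\\ 1 & 1\end{psmallmatrix}$. The corollary asserts that adjoining the single relator $x^r = 1$ to this presentation yields $\SL_2(\Z/r\Z)$ for odd $r$; equivalently, that the normal closure $N$ of $x^r$ in $\cH_2 \cong \SLP{2}$ is precisely the kernel of the reduction map $\SLP{2} \to \SL_2(\Z/r\Z)$.

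First I would observe that since $\gcd(r,2)=1$, the ring $\Z[\tfrac12]$ surjects onto $\Z/r\Z$, so reduction mod $r$ gives a well-defined surjective homomorphism $\pi_r \colon \SLP{2} \to \SL_2(\Z/r\Z)$ (surjectivity because $\SL_2(\Z) \to \SL_2(\Z/r\Z)$ is already surjective and $\SL_2(\Z) \leq \SLP{2}$). Let $\Gamma(r) = \ker \pi_r$ be the principal congruence subgroup of level $r$. Since $\pi_r(A^r) = \begin{psmallmatrix}1 & 0 \\ r & 1\end{psmallmatrix} = I$ in $\SL_2(\Z/r\Z)$, the normal closure $N := \langle\!\langle A^r \rangle\!\rangle$ is contained in $\Gamma(r)$, so $\pi_r$ factors through $\SLP{2}/N$, giving a surjection $\overline{\pi_r}\colon \SLP{2}/N \twoheadrightarrow \SL_2(\Z/r\Z)$. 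The presentation claimed for $\SL_2(\Z/r\Z)$ is exactly a presentation for $\SLP{2}/N$ (using $\varphi_2$ and adding $x^r$), so the content of the corollary is precisely that this surjection is an isomorphism, i.e.\ that $N = \Gamma(r)$. The reverse inclusion $\Gamma(r) \leq N$ is exactly Mennicke's theorem \cite{Mennicke1967}, which states that $\Gamma(r)$ is the normal closure of $A^r$ in $\SLP{p}$ (here $p=2$) whenever $\gcd(r,p)=1$; applying it with the hypothesis $\gcd(r,2)=1$ closes the argument.

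The one point requiring care is that Mennicke's cited result, as quoted in the excerpt, is phrased for $\SLP{p}$ with $p$ prime, whereas a priori one might want it for general $\SLP{m}$; but since we only invoke $p = 2$, no extension is needed, and this is in fact the cleanest case of Mennicke's theorem. Thus the main (and essentially only) obstacle is bookkeeping: verifying that adjoining $x^r$ to the $\cH_2$-presentation really does present the quotient $\SLP{2}/\langle\!\langle A^r\rangle\!\rangle$ — which is immediate from von Dyck's theorem once one knows $\cH_2 \cong \SLP{2}$ — and citing Mennicke correctly. I would write the proof in essentially two lines: transport the presentation of $\SLP{2}$ from the previous theorem along $\varphi_2^{-1}$, adjoin the relator $x^r$ to pass to the quotient by $\langle\!\langle A^r \rangle\!\rangle$, and identify this quotient with $\SL_2(\Z/r\Z)$ via Mennicke's theorem together with surjectivity of reduction mod $r$.
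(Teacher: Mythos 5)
Your proposal is correct and follows essentially the same route as the paper: transport the two-generator presentation of $\SLP{2}$ along $\varphi_2$, adjoin $x^r$ to pass to the quotient by the normal closure of $A^r$, and identify that quotient with $\SL_2(\Z/r\Z)$ via Mennicke's theorem (applied with $p=2$, $\gcd(r,2)=1$) together with surjectivity of reduction mod $r$. The extra bookkeeping you supply (factoring $\pi_r$ through the quotient and invoking von Dyck) is exactly the implicit content of the paper's one-line deduction.
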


Note that deficiency zero presentations of $\SL(2,\Z / r\Z)$ (which has trivial Schur multiplier) for odd $r$ are known (see \cite{Campbell1980} and the remark following Theorem~4 therein). The above presentation for the group has the advantage of being very simple to remember.

\section*{Acknowledgements}

I would like to thank G. Ellis, K. Hutchinson, and A. Rahm for helpful discussions, pointers to the literature, and encouragement to write up this short note.

\bibliographystyle{amsalpha}
\bibliography{sl2z1m-presentation.bib}

 \end{document}